\newcommand{\sysn}{\left\{\begin{array}{rcl}}
\newcommand{\sysk}{\end{array}\right.}
\newtheorem{theorem}{Theorem}[section]
\theoremstyle{example}
\newtheorem{proposition}[theorem]{Proposition}
\theoremstyle{definition}
\newtheorem{definition}[theorem]{Definition}
\newtheorem{corollary}[theorem]{Corollary}
\journal{...}
\begin{document}

\title{The $\Delta_1$-property of $X$ is equivalent to the Choquet property of $B_1(X)$}

\author{Alexander V. Osipov}

\address{Krasovskii Institute of Mathematics and Mechanics, \\ Ural Federal
 University, Yekaterinburg, Russia}

\ead{OAB@list.ru}

\begin{abstract}

 We give a characterization of the $\Delta_1$-property of any
Tychonoff space $X$ in terms of the function space $B_1(X)$ of all Baire-one real-valued
functions on a space  $X$ with the topology of pointwise convergence. We establish that for a Tychonoff space $X$ the $\Delta_1$-property  is equivalent to the Choquet property of $B_1(X)$. Also we construct under $ZFC$ an example
of a separable pseudocompact space $X$ such that $C_p(X)$
is $\kappa$-Fr\'{e}chet-Urysohn but $X$ fails to be a $\Delta_1$-space.  This answers a question of K\c{a}kol-Leiderman-Tkachuk.
\end{abstract}

\begin{keyword}  Baire function \sep $k$-Fr\'{e}chet-Urysohn space \sep Choquet property \sep $\Delta_1$-space \sep scattered space  \sep $C_p$-theory

\MSC[2010] 54C35 \sep 54E52  \sep 54G12

\end{keyword}

\maketitle 


\section{Introduction}

A family $\{A_{\alpha}: \alpha\in \kappa\}$ of subsets of a set $X$ is said to be {\it point-finite} if for every $x\in X$, $\{\alpha\in \kappa: x\in A_{\alpha}\}$ is finite.

A space $X$ has the {\it $\Delta_1$-property} \cite{KKL} if any disjoint sequence $\{A_n: n\in \omega\}$ of
countable subsets of $X$ has a point-finite open expansion, i.e., there exists a point-finite family $\{U_n: n\in \omega\}$ of open subsets of $X$ such that
$A_n\subset U_n$ for every $n\in \omega.$  The spaces with the $\Delta_1$-property are also called $\Delta_1$-spaces.

The $\Delta_1$-property was intoduced and studied systematically in \cite{KKL} and \cite{KLT}.

A space $X$ is {\it Fr\'{e}chet-Urysohn} if, for any $A\subseteq X$ and any $x\in A$, there exists
a sequence $\{a_n: n\in \omega\}\subseteq A$ that converges to $x$.

A space $X$ is called {\it $k$-Fr\'{e}chet-Urysohn} if, for any open set $U\subset X$ and any point $x\in \overline{U}$, there exists a sequence $\{x_n: n\in \omega\}\subset U$ that converges to~$x$. Clearly, every Fr\'{e}chet-Urysohn space is $k$-Fr\'{e}chet-Urysohn.

\medskip
A family $\{A_{\alpha}: \alpha\in \kappa\}$ of subsets of a space $X$ is said to be {\it strongly point-finite} if for every $\alpha\in \kappa$, there exists an open
set $U_{\alpha}$ of $X$ such that $A_{\alpha}\subset U_{\alpha}$ and $\{U_{\alpha}: \alpha\in \kappa\}$ is point-finite.

A space $X$ is said to have {\it property $(\kappa)$} if every pairwise
disjoint sequence of finite subsets of $X$ has a strongly point-finite subsequence.

\medskip

 In \cite{Sakai}, Sakai characterized $\kappa$-Fr\'{e}chet-Urysohn property in $C_p(X)$ showing that $C_p(X)$ is $\kappa$-Fr\'{e}chet-Urysohn if and only if $X$ has the property $(\kappa)$.

\medskip

It was also shown in \cite{KKL} that $C_p(X)$ is $\kappa$-Fr\'{e}chet-Urysohn whenever $X$ has the $\Delta_1$-property and,
under Martin's Axiom, even the Fr\'{e}chet-Urysohn property of $C_p(X)$ does not
imply that $X$ is a $\Delta_1$-space.

\medskip

 A space is {\it meager} (or {\it of the first Baire category}) if it
can be written as a countable union of closed sets with empty
interior. A topological space $X$ is {\it Baire} if the Baire
Category Theorem holds for $X$, i.e., the intersection of any
sequence of open dense subsets of $X$ is dense in $X$. Clearly, if
$X$ is a Baire space, then $X$ is not meager.

\medskip

In \cite{Os3} it is proved that $X$ has the property $(\kappa)$  if and only if $B_1(X)$ is Baire.

\medskip

Both Baire and meager space have game characterizations due to
Oxtoby \protect\cite{Ox}.

\medskip
The game $G_I(X)$ is started by the player ONE who selects a
nonempty open set $V_0\subseteq X$. Then the player TWO responds
selecting a nonempty open set $V_1\subseteq V_0$. At the $n$-th
inning the player ONE selects a nonempty open set $V_{2n}\subseteq
V_{2n-1}$ and the player TWO responds selecting a nonempty open
set $V_{2n+1}\subseteq V_{2n}$. At the end of the game, the player
ONE is declared the winner if $\bigcap\limits_{n\in \omega} V_n$
is empty. In the opposite case the player TWO wins the game
$G_I(X)$.

The game $G_{II}(X)$ differ from the game $G_I(X)$ by the order of
the players. The game $G_{II}(X)$ is started by the player TWO who
selects a nonempty open set $V_0\subseteq X$. Then player ONE
responds selecting a nonempty open set $V_1\subseteq V_0$. At the
$n$-th inning the player TWO selects a nonempty open set
$V_{2n}\subseteq V_{2n-1}$ and the player ONE responds selecting a
nonempty open set $V_{2n+1}\subseteq V_{2n}$. At the end of the
game, the player ONE is declared the winner if
$\bigcap\limits_{n\in\omega} V_n$ is empty. In the opposite case
the player TWO wins the game $G_{II}(X)$.

The following classical characterizations can be found in
\protect\cite{Ox}.

\medskip

A topological space $X$ is

(1) meager if and only if the player ONE has a winning strategy in
the game $G_{II}(X)$;

(2) Baire if and only if the player ONE has no winning strategy in
the game $G_{I}(X)$.

\medskip

A topological space $X$ is defined to be {\it Choquet} if the
player TWO has a winning strategy in the game $G_{I}(X)$. Choquet
spaces were introduced in 1975 by White \cite{Whi} who called them
{\it weakly $\alpha$-favorable spaces}.

\medskip

We establish that the $\Delta_1$-property of $X$ is equivalent to the Choquet property of $B_1(X)$ and  we construct under $ZFC$ an example
of a separable pseudocompact space $X$ such that $C_p(X)$
is $\kappa$-Fr\'{e}chet-Urysohn but $X$ fails to be a $\Delta_1$-space.  This answers in the positive
Question 4.7 in \cite{KLT}.

\section{Main definitions and notation}

In this paper all spaces are assumed to be Tychonoff. The expression $C(X,Y)$ denotes the set of all continuous maps from a space $X$
to a space $Y$. We follow the usual practice to write $C(X)$ instead of $C(X,\mathbb{R})$. The
space $C_p(X)$ is the set $C(X)$ endowed with the pointwise convergence topology.

A real-valued function $f$ on a space $X$ is a {\it Baire-one
function} (or a {\it function of the first Baire class}) if $f$ is
a pointwise limit of a sequence of continuous functions on $X$.
Let $B_1(X)$ denote the space of all Baire-one real-valued
functions on a space  $X$ with the topology of pointwise convergence.

 We recall that a subset of $X$ that is the
 complete preimage of zero for a certain function from~$C(X)$ is called a zero-set.
A subset $O\subseteq X$  is called  a cozero-set (or functionally
open) of $X$ if $X\setminus O$ is a zero-set of $X$. It is easy to
check that zero sets are preserved by finite unions and countable
intersections. Hence cozero sets are preserved by finite
intersections and countable unions. Countable unions of zero sets
will be denoted by $Zer_{\sigma}$ (or $Zer_{\sigma}(X)$),
countable intersection of cozero sets by $Coz_{\delta}$ (or
$Coz_{\delta}(X)$). It is easy to check that $Zer_{\sigma}$-sets
are preserved by countable unions and finite intersections. It is well known that $f$ is
of the first Baire class if and only if $f^{-1}(U)\in
Zer_{\sigma}$ for every open $U\subseteq \mathbb{R}$ (see Exercise
3.A.1 in \protect\cite{lmz1}).

\medskip

A $Coz_{\delta}$-subset of $X$ containing $x$ is called a {\it
$Coz_{\delta}$ neighborhood} of $x$.

\medskip

 A set $A\subseteq X$ is called {\it strongly
$Coz_{\delta}$-disjoint}, if there is a pairwise disjoint
collection $\{F_a: F_a$ is a $Coz_{\delta}$ neighborhood of $a$,
$a\in A\}$  such that $\{F_a: a\in A\}$ is a {\it completely
$Coz_{\delta}$-additive system}, i.e. $\bigcup\limits_{b\in B}
F_b\in Coz_{\delta}$ for each $B\subseteq A$.

A disjoint sequence $\{\Delta_n: n\in \omega\}$ of (finite)
sets is  {\it strongly $Coz_{\delta}$-disjoint} if the set
$\bigcup\{\Delta_n: n\in \omega\}$ is strongly
$Coz_{\delta}$-disjoint (see Definition 1 in \cite{Osip1}).

\medskip

 In (\cite{Osip1}, see Theorem 4.3),  it is proved that $B_1(X)$ is Choquet (pseudocomplete) if and only if every countable subset of $X$ is strongly
$Coz_{\delta}$-disjoint.

\medskip

The sequence $\{\mathcal{C}_n : n\in \omega\}$ is called {\it
pseudocomplete} if, for any family $\{U_n: n\in \omega\}$ such
that $\overline{U_{n+1}}\subseteq U_n$ and we have $U_n\in
\mathcal{C}_n$ for each $n\in\omega$, we have $\bigcap \{U_n:
n\in\omega\}\neq \emptyset$. A space $X$ is called {\it
pseudocomplete} if there is a pseudocomplete sequence
$\{\mathcal{B}_n : n\in\omega\}$ of $\pi$-bases in $X$.

\medskip

It is a well-known that any pseudocomplete space is Baire and any
$\check{C}$ech-complete space is pseudocomplete. Note that if $X$
has a dense pseudocomplete subspace (in particular, if $X$ has a
dense $\check{C}$ech-complete subspace) then $X$ is pseudocomplete
(p. 47 in \protect\cite{Tk}).

\medskip

\section{Main results}

\begin{theorem}\label{th1} For a Tychonoff space $X$, the following conditions are equivalent:

\begin{enumerate}

\item  $X$ has the $\Delta_1$-property.

\item   $B_1(X)$ is Choquet.

\item  $B_1(X)$ is pseudocomplete.

\item Every countable subset of $X$ is strongly
$Coz_{\delta}$-disjoint.

\end{enumerate}

\end{theorem}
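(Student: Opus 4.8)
The plan is to prove a cycle of implications, leaning heavily on the two previously cited results: the characterization from $\cite{Osip1}$ (Theorem~4.3) that $B_1(X)$ is Choquet if and only if every countable subset of $X$ is strongly $Coz_\delta$-disjoint, and the general fact that pseudocomplete implies Baire with the relationship to the Choquet game. I would organize the argument around the scheme $(1)\Leftrightarrow(4)$, $(4)\Rightarrow(3)$, $(3)\Rightarrow(2)$, $(2)\Leftrightarrow(4)$, so that the topological heart of the matter is the equivalence of the combinatorial $\Delta_1$-property with the strong $Coz_\delta$-disjointness condition.

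First I would establish $(2)\Leftrightarrow(4)$, which is essentially immediate: it is exactly the content of Theorem~4.3 in $\cite{Osip1}$, so this link requires only citing that result. For $(3)\Rightarrow(2)$ I would invoke the standard fact, recalled in the excerpt, that any pseudocomplete space is Baire, together with the finer observation that a pseudocomplete sequence of $\pi$-bases yields a winning strategy for player TWO in the game $G_I$: given ONE's open set, TWO shrinks it to a member of $\mathcal{B}_n$ with closure inside the previous set, and the pseudocompleteness guarantees the intersection is nonempty, so TWO wins and $B_1(X)$ is Choquet. For $(4)\Rightarrow(3)$ I would again use the parenthetical remark in $\cite{Osip1}$, where Theorem~4.3 is stated as characterizing both the Choquet and the pseudocomplete properties of $B_1(X)$ by the same condition $(4)$; so the three properties $(2)$, $(3)$, $(4)$ are linked through that single theorem.

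The genuinely new step, and the one I expect to be the main obstacle, is the equivalence $(1)\Leftrightarrow(4)$: translating the $\Delta_1$-property of $X$ (every disjoint sequence of countable sets has a point-finite open expansion) into the statement that every countable subset of $X$ is strongly $Coz_\delta$-disjoint (a single countable set decomposes into disjoint $Coz_\delta$ neighborhoods forming a completely $Coz_\delta$-additive system). For the direction $(1)\Rightarrow(4)$, given a countable set $A=\{a_n:n\in\omega\}$, I would apply the $\Delta_1$-property to the disjoint sequence of singletons $\{a_n\}$ to extract point-finite open expansions, then upgrade the open sets to $Coz_\delta$ neighborhoods using Tychonoff separation and the fact that points in a Tychonoff space have $Coz_\delta$ (indeed zero-set) neighborhood bases; the delicate part is arranging pairwise disjointness and the complete $Coz_\delta$-additivity, i.e. that arbitrary subunions stay in $Coz_\delta$, which point-finiteness should deliver since over any point only finitely many of the neighborhoods overlap. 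For the converse $(4)\Rightarrow(1)$, given a disjoint sequence $\{A_n:n\in\omega\}$ of countable sets, I would apply $(4)$ to the countable union $\bigcup_n A_n$ to obtain a strongly $Coz_\delta$-disjoint family $\{F_a\}$, and then set $U_n=\bigcup_{a\in A_n}F_a$, which lies in $Coz_\delta$ (hence is open as a countable union of cozero sets) and expands $A_n$; the point-finiteness of $\{U_n\}$ follows because the $F_a$ are pairwise disjoint, so each point lies in at most one $F_a$ and hence in at most one $U_n$. The main technical care will be in verifying that the completely $Coz_\delta$-additive condition is both extractable from and sufficient for the point-finite open expansions, and in handling the passage between countable unions of zero-sets and the cozero/$Coz_\delta$ bookkeeping that Tychonoff separation makes available.
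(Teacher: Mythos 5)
Your overall scheme (isolate $(1)\Leftrightarrow(4)$ as the new content and route $(2)$, $(3)$, $(4)$ through the cited characterization from \cite{Osip1}) is the same as the paper's, and your sketch of $(1)\Rightarrow(4)$ --- expand the singletons point-finitely, shrink to disjoint zero-set neighborhoods by Tychonoff separation, and use point-finiteness to get complete $Coz_\delta$-additivity --- is exactly the paper's argument in outline. (The detail you defer is real but doable: the paper writes each zero-set $S_i$ as a decreasing intersection of cozero sets $W_{i,j}\subseteq U_i$ and shows $\bigcup_i S_i=\bigcap_j\bigcup_i W_{i,j}$ by a diagonal argument using point-finiteness of $\{U_i\}$; arbitrary subunions are then handled by subtracting the complementary $Zer_\sigma$-union from this $Coz_\delta$-set.)

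The genuine gap is in your $(4)\Rightarrow(1)$. You set $U_n=\bigcup_{a\in A_n}F_a$ and assert that this ``lies in $Coz_\delta$ (hence is open as a countable union of cozero sets).'' But the $F_a$ are $Coz_\delta$-sets, i.e.\ countable \emph{intersections} of cozero sets, not cozero sets themselves, and a $Coz_\delta$-set is in general not open (a single point of $\mathbb{R}$ is a zero-set, hence $Coz_\delta$, and $\mathbb{Q}=\bigcup_{q\in\mathbb{Q}}\{q\}$ is a countable union of such sets that is not open). So your $U_n$ is a point-finite expansion by $Coz_\delta$-sets, whereas the $\Delta_1$-property demands a point-finite \emph{open} expansion; producing open sets is precisely the nontrivial content of this direction, and your disjointness argument for point-finiteness evaporates once you enlarge to open sets. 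The paper's fix is a diagonalization: writing $O=\bigcup_a O_a=\bigcap_i W_i$ and each tail $T_j=O\setminus\bigcup_{i\le j}F_i$ as $\bigcap_i W_{i,j}$ (all $W$'s cozero, using complete $Coz_\delta$-additivity), it defines $P_n=W_n\cap\bigcap\{W_{n-i,i}:1\le i\le n-1\}$. These are cozero, hence open, contain $S_n$, are decreasing, and have empty intersection (a point in all $P_n$ would lie in every $T_j$, hence in $O\setminus\bigcup_i F_i=\emptyset$); a decreasing sequence with empty intersection is point-finite. Some device of this kind is unavoidable, so as written your proof of $(4)\Rightarrow(1)$ does not go through.
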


\begin{proof} $(1)\Rightarrow(2)$. Let $S$ be a countable subset of $X$. Let us represent the set $S=\{s_i: i\in \omega\}$. Then the sequence $(s_i)_{i\in \omega}$
has a point-finite open expansion, i.e.  for every $i\in \omega$, there exists an open
set $U_{i}$ of $X$ such that $s_{i}\in U_{i}$ and $\{U_{i}: i\in \omega\}$ is point-finite.
Since $X$ is functionally Hausdorff (Tychonoff), for any pair $x,y\in S$ ($x\neq y$) there is a continuous function $f_{xy}:X\rightarrow [0,1]$ such that $f_{xy}(x)=0$ and $f_{xy}(y)=1$. 

Since $X$ is Tychonoff, for every $i\in \omega$ there is a continuous function $f_{i}:X\rightarrow [0,1]$ such that  $f_{i}(s_{i})\subseteq\{0\}$ and $f_{i}(X\setminus U_{i})\subseteq\{1\}$.

For every  $s_i\in S$ denote by

$S_i=(\bigcap\{f^{-1}_{ys_i}(1): y\in  S\setminus \{s_i\}\})\cap (\bigcap\{f^{-1}_{s_iy}(0): y\in  S\setminus \{s_i\}\})\cap f^{-1}_i(0)$.

Since $f^{-1}_{s_js_i}(1)\cap f^{-1}_{s_js_i}(0)=\emptyset$, $S_i\cap S_j=\emptyset$ for any $i,j\in \omega$ ($i\neq j$).

Since $S$ is countable,  $S_i$ is a zero-set of $X$ and $s_{i}\in S_i\subseteq U_i$ for every $i\in\omega$.

For every $i\in\omega$ we consider the family $\{W_{i,j}: j\in \omega\}$ of co-zero sets of $X$ such that $S_i=\bigcap\{W_{i,j}: j\in \omega\}$,
$W_{i,j+1}\subset W_{i,j}$ and $W_{i,1}\subset U_i$ for any $j,i\in \omega$.

Consider the set $A=\bigcap\limits_{j\in \omega} \bigcup\limits_{i\in \omega} W_{i,j}$. Note that $\bigcup\limits_{i\in \omega} S_i\subseteq A$.

\bigskip

We claim that $A\subseteq \bigcup\limits_{i\in \omega} S_i$. Let $x\not\in \bigcup\limits_{i\in \omega} S_i$. Then the set $\{i: x\in U_i\}$ is finite. Let $\{i_1,...,i_m\}=\{i: x\in U_i\}$. For every $s\in \{1,...,m\}$ there is $j_s\in \omega$ such that $x\not\in W_{i_s,j_s}$. Denote by $l=\max\{j_s: s\in \{1,...,m\}\}$.
Then $x\not\in W_{i,l}$ for every $i\in \omega$ and, hence, $x\not\in A$ and  $A\subseteq \bigcup\limits_{i\in \omega} S_i$.

Thus $A=\bigcup\limits_{i\in \omega} S_i$ is a $Coz_{\delta}$-set of $X$.  Let $C\subset \omega$. Since $\bigcup\limits_{i\in \omega\setminus C} S_i$ is a $Zer_{\sigma}$-set, $\bigcup\limits_{i\in C} S_i=A\setminus (\bigcup\limits_{i\in \omega\setminus C} S_i)$ is a $Coz_{\delta}$-set of $X$.

It follows that the family $\{S_i: i\in \omega\}$ of zero-sets of $X$ is disjoint and completely $\mathrm{Coz}_{\delta}$-additive. Thus, $S$ is strongly
$Coz_{\delta}$-disjoint.

$(2)\Rightarrow(1)$.  Let $\{S_i: i\in \omega\}$ be a pairwise disjoint sequence of non-empty countable subsets of $X$.
Then $S=\bigcup S_i$ is strongly
$Coz_{\delta}$-disjoint, i.e. there is a pairwise disjoint collection $\{O_a : O_a$ is a $Coz_{\delta}$ neighborhood of $a$, $a\in S\}$ such that $\{O_a : a \in S\}$ is a completely $Coz_{\delta}$-additive system.

By Lemma 3.3 in \cite{Osip1}, we can assume that $O_a$ is a zero-set of $X$ for every $a\in S$.

Let $O=\bigcup \{O_{a}: a\in S\}$. Since $\{O_a : a \in S\}$ is completely $\mathrm{Coz}_{\delta}$-additive, $O$ is $\mathrm{Coz}_{\delta}$-set in $X$, i.e. there is a family $\{W_i: i\in \omega\}$ of co-zero sets of $X$ such that $O=\bigcap \{W_i: i\in \omega\}$ and $W_{i+1}\subset W_i$ for every $i\in \omega$.

Let $F_i=\bigcup\{O_{a}: a\in S_i\}$. Note that $F_i$ is a $Zet_{\sigma}$-set of $X$ for any $i\in \omega$.

Since $\{O_a : a \in S\}$ is completely $\mathrm{Coz}_{\delta}$-additive, the set $T_j=O\setminus \bigcup\limits_{i=1}^j F_i$ is a $\mathrm{Coz}_{\delta}$-set in $X$. Let $T_j=\bigcap \{W_{i,j}: i\in \omega\}$ where $\{W_{i,j}: i\in \omega\}$ is a family of co-zero sets of $X$ for every $j\in \omega$ such that such that $W_{i+1,j}\subset W_{i,j}$ for every $i\in \omega$.

Let $P_1=W_1$, $P_2=W_2\cap W_{1,1}$, $P_3=W_3\cap W_{2,1}\cap W_{1,2}$ and so on, i.e. $P_n=W_n\bigcap \{W_{n-i,i}: i\in \overline{1,n-1}\}$.

Note that $\{P_n: n\in \omega\}$ is a family co-zero sets of $X$ such that $S_n\subset P_n$ for every $n\in \omega$, $P_{n+1}\subset P_n$ and $\bigcap P_n=\emptyset$. Thus the family $\{S_n: n\in \omega\}$ has a point-finite open expansion $\{P_n: n\in \omega\}$.

$(2)\Leftrightarrow(3)\Leftrightarrow(4)$. By Theorems 4.4 and 4.5 in \cite{Osip1}.

\end{proof}

A topological space $X$ is defined to be
{\it $k$-scattered} if every compact Hausdorff subspace of $X$ is scattered.

A topological space $X$ is called {\it almost $K$-analytic} if every countable subset of $X$ is contained in a $K$-analytic $G_{\delta}$-subspace of $X$.

By Theorem 17 in \cite{Osip1}, we get the following.

\begin{corollary}{\it For an almost $K$-analytic space $X$, the following conditions are equivalent:

\begin{enumerate}

\item $C_p(X)$ is $\kappa$-Fr\'{e}chet-Urysohn.

\item $B_1(X)$ is Choquet.

\item $B_1(X)$ is Baire.

\item  $X$ has the property $(\kappa)$.

\item  $X$ has the $\Delta_1$-property.

\item $X$ is $k$-scattered.

\end{enumerate}}

\end{corollary}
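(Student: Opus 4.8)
The plan is to prove the Corollary by establishing a cycle of implications connecting all six conditions, leaning on the main Theorem~\ref{th1} for the bulk of the work and invoking the two external results (Sakai's theorem and Theorem~17 of \cite{Osip1}) only where the almost $K$-analytic hypothesis is genuinely needed. First I would observe that Theorem~\ref{th1} already gives us the equivalences $(2)\Leftrightarrow(5)$ and $(5)\Rightarrow(2)\Rightarrow(3)$ for \emph{every} Tychonoff space, since that theorem shows the $\Delta_1$-property is equivalent to $B_1(X)$ being Choquet and to $B_1(X)$ being pseudocomplete; in particular $(2)\Leftrightarrow(5)$ holds unconditionally. Likewise, Sakai's characterization quoted in the Introduction gives $(1)\Leftrightarrow(4)$ for every Tychonoff space, as $C_p(X)$ is $\kappa$-Fr\'{e}chet-Urysohn precisely when $X$ has property $(\kappa)$. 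Finally, the result of \cite{Os3} stated in the Introduction gives $(3')\Leftrightarrow(4)$ where $(3')$ reads ``$B_1(X)$ is Baire,'' so the chain $(3)\Leftrightarrow(4)$ in the Corollary is also unconditional. These cheap equivalences already link $\{2,5\}$, $\{1,4\}$, and $\{3,4\}$.

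The next step is to close the gaps that require the almost $K$-analytic hypothesis. Here the decisive input is Theorem~17 of \cite{Osip1}, which the Corollary explicitly cites. I expect this theorem to assert that, for an almost $K$-analytic space $X$, property $(\kappa)$ is equivalent to the $\Delta_1$-property, and that both are equivalent to $X$ being $k$-scattered. Concretely, I would use it to supply the implications $(4)\Rightarrow(6)$, $(6)\Rightarrow(5)$, and $(5)\Rightarrow(4)$ (or whatever subset of these is not already free), thereby merging the clusters $\{1,4\}$, $\{2,5\}$, $\{3\}$, and $\{6\}$ into a single equivalence class. The point is that in full generality property $(\kappa)$ is strictly weaker than the $\Delta_1$-property---indeed the paper's own separable pseudocompact example witnesses exactly this gap---so the almost $K$-analytic assumption is precisely what forces these two properties to coincide and to coincide further with $k$-scatteredness.

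With the two ingredients in place the assembly is purely formal: I would write the cycle
\[
(1)\Leftrightarrow(4)\Leftrightarrow(3),\qquad (4)\Leftrightarrow(5)\Leftrightarrow(2),\qquad (5)\Leftrightarrow(6),
\]
and note that every pair among $(1)$--$(6)$ is connected through these links, so all six statements are equivalent. The only nontrivial content is imported: the unconditional equivalences come from Theorem~\ref{th1}, Sakai's theorem, and \cite{Os3}, while the almost $K$-analytic case of $(5)\Leftrightarrow(6)$ and $(4)\Leftrightarrow(5)$ comes from Theorem~17 of \cite{Osip1}.

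The main obstacle I anticipate is not in any calculation but in correctly matching the exact statement of Theorem~17 of \cite{Osip1} to the labels $(1)$--$(6)$ and verifying that the almost $K$-analytic hypothesis is invoked only where it is truly required---that is, confirming that $(4)\Rightarrow(5)$ (property $(\kappa)$ implies $\Delta_1$) genuinely fails without the hypothesis, so that the Corollary is not silently claiming an unconditional equivalence that the body of the paper refutes. Once the role of the hypothesis is pinned down, the proof reduces to the one-line remark that Theorem~17, together with the unconditional equivalences already recorded, yields the full six-way equivalence.
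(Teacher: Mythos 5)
Your proposal is correct and follows essentially the same route as the paper: the paper's entire proof is the citation ``By Theorem 17 in \cite{Osip1}'' together with Theorem~\ref{th1}, and your assembly of the unconditional links (Theorem~\ref{th1} for $(2)\Leftrightarrow(5)$, Sakai for $(1)\Leftrightarrow(4)$, \cite{Os3} for $(3)\Leftrightarrow(4)$) plus the almost $K$-analytic input from Theorem~17 of \cite{Osip1} to attach $(6)$ and merge the clusters is exactly the intended argument, just written out explicitly. Your caveat about pinning down the precise statement of Theorem~17 is the only genuine dependency, and the paper offers no more detail than you do.
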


\section{Solution of open questions}

\begin{definition}(Definition 3.3 in \cite{Osip2}). Let $\mathcal{S}=\{S_n : n\in \omega\}$ be a family of subsets of a space $X$ and $x\in X$. Then $\mathcal{S}$ {\it weakly converges} to $x$ if for every neighborhood $W$ of $x$ there is a sequence $(s_n : n\in \omega)$ such that $s_n\in S_n$ for each $n\in \omega$ and there is $n'$ such that $s_n\in W$ for each $n>n'$.
\end{definition}

\medskip

In \cite{KLT} an equivalent formulation is proposed: a family $\mathcal{S}=\{S_n : n\in \omega\}$ of subsets of a space $X$ {\it weakly
converges} to $x$ if for any neighborhood $W$ of $x$ there exists a finite set $S\subset \omega$
such that $S_n\cap W\neq \emptyset$  for any $n\in \omega\setminus S$.

\medskip

In this paper a family $\mathcal{S}=\{S_n : n\in \omega\}$ will be called a {\it weakly converging sequence} if $\mathcal{S}$ is countably infinite, disjoint, every element $S_n$ of $\mathcal{S}$ is a non-empty finite set and $\mathcal{S}$ weakly converges to some point $x\in X$.

\medskip

In (\cite{KLT}, Theorem 3.19), it is proved that if $X$ is a maximal space, then it has no weakly convergent sequences.

\medskip

 A non-empty space is called {\it crowded} if it
has no isolated points.

\medskip


\medskip

\begin{proposition}\label{pr1} If $X$ is a crowded submaximal space, then it has no weakly convergent sequences.
\end{proposition}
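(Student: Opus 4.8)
The plan is to argue by contradiction. Suppose $\mathcal{S}=\{S_n:n\in\omega\}$ is a weakly converging sequence with weak limit $x$, and set $A=\bigcup_{n\in\omega}S_n$. First I would record a convenient reformulation of the hypothesis: since each neighbourhood $W$ of $x$ meets all but finitely many $S_n$, the same is true of any subfamily indexed by an infinite $M\subseteq\omega$, so $\mathcal{S}$ weakly converges to $x$ if and only if $x\in\overline{\bigcup_{n\in M}S_n}$ for \emph{every} infinite $M\subseteq\omega$. Replacing each $S_n$ by $S_n\setminus\{x\}$ (this discards at most one, now possibly empty, block) I may assume $x\notin A$. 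The statement then reduces to producing a single infinite $M$ with $x\notin\overline{\bigcup_{n\in M}S_n}$. Two features of a crowded submaximal space drive the argument: every dense set is open, hence every subset is open in its closure, which forces every nowhere dense set to be closed and discrete; and submaximality is hereditary and forbids two disjoint dense subsets in any subspace (hereditary irresolvability). Crowdedness enters through the facts that every non-empty open set is infinite and every finite set is nowhere dense.

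Next I would exploit the closed-discrete behaviour of nowhere dense sets via the boundary of $U=\mathrm{int}\,\overline{A}$. The set $\partial\overline{A}=\overline{A}\setminus U$ is the boundary of a closed set, hence nowhere dense, hence closed and discrete. A block satisfies $S_n\subseteq\partial\overline{A}$ exactly when $S_n\cap U=\emptyset$. If infinitely many $n$ satisfy this, collect them into $M$: then $\bigcup_{n\in M}S_n$ is a subset of a closed discrete set, so it is itself closed, and since $x\notin A$ we obtain $x\notin\overline{\bigcup_{n\in M}S_n}$, contradicting weak convergence. In particular this already settles the case where $A$ is nowhere dense ($U=\emptyset$). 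This is the exact analogue, for finite blocks, of the remark that the range of a nontrivial convergent sequence would be forced into the discrete boundary $\partial(X\setminus\overline{A})$, where convergence is impossible.

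It remains to treat the case in which all but finitely many blocks meet $U$. Here I would pass to the open subspace $U$, which is again crowded and submaximal. The traces $S_n\cap U$ partition $A\cap U$, which is dense in $U$ and therefore, by submaximality, open; moreover $\{S_n\cap U\}$ still weakly converges to $x$, since the discarded parts $S_n\setminus U$ lie in the closed discrete set $\partial\overline{A}$ and so contribute nothing to the closure near $x$. Thus the problem reduces to the \emph{dense} configuration: an open set, partitioned into non-empty finite blocks, whose every infinite subunion clusters at $x$, with $x$ outside the union.

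The main obstacle is exactly this dense configuration: the boundary trick is useless once the blocks are spread across an open set, and the naive idea of separating $x$ by intersecting the neighbourhoods $X\setminus S_n$ fails because an infinite intersection need not be open (this is precisely why convergent sequences are replaced by weakly converging ones). To finish I would lean on hereditary irresolvability together with the finiteness of the blocks. Splitting the index set into two infinite halves gives two subfamilies whose unions partition the open dense set; by irresolvability they cannot both be dense, so one of them omits a non-empty open set $W$, and since any neighbourhood of $x$ must meet all but finitely many blocks, necessarily $x\notin W$. The delicate point, and where I expect the real work to lie, is to convert this \emph{local} non-density supplied by irresolvability (an open piece $W$ away from $x$ that misses an infinite subfamily) into the omission of infinitely many \emph{entire} finite blocks by a single neighbourhood of $x$ — that is, to iterate and recombine the omitted open pieces, using the finiteness of each $S_n$, so as to exhibit a neighbourhood of $x$ disjoint from $\bigcup_{n\in M}S_n$ for some infinite $M$.
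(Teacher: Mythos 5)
Your reduction steps are sound: the reformulation of weak convergence as ``$x\in\overline{\bigcup_{n\in M}S_n}$ for every infinite $M\subseteq\omega$'', the disposal of blocks lying in the nowhere dense (hence, by submaximality, closed and discrete) boundary $\overline{A}\setminus\mathrm{Int}\,\overline{A}$, and the passage to the dense configuration inside $U$ are all correct. But the argument has a genuine gap exactly where you flag it, and that gap is the whole content of the proposition. Splitting the index set in two and invoking hereditary irresolvability yields only \emph{one} nonempty open set $W$ (with $x\notin W$) missing one infinite subfamily; iterating the bisection does not terminate, because at level $k$ you obtain only $2^k$ pairwise disjoint pieces, and nothing prevents all $2^k$ of them from having nonempty interior inside the countably infinite set $A$ for every finite $k$. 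No finite splitting scheme forces some infinite subunion to have empty interior, and without that you never manufacture a neighbourhood of $x$ omitting infinitely many whole blocks.

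The paper closes this gap with a short uncountability argument that your outline is missing: take an uncountable (the paper uses a maximal) almost disjoint family $\mathcal{H}$ of infinite subsets of $\omega$ and set $A_E=\bigcup_{n\in E}S_n$. For $E\neq F$ the set $A_E\cap A_F$ is finite, so by crowdedness $\mathrm{Int}\,A_E\cap \mathrm{Int}\,A_F=\emptyset$; uncountably many pairwise disjoint nonempty open sets cannot all meet the countable set $\bigcup_n S_n$, so some $A_E$ has empty interior, is therefore closed and discrete by submaximality, and hence has no limit points --- contradicting $x\in\overline{A_E}$, which your own reformulation guarantees for the infinite index set $E$. In effect your branching scheme must be pushed to the continuum many branches of the tree (an almost disjoint family) rather than stopped at finite levels; that single idea is what the proposal lacks, and with it none of your preliminary reductions (boundary, passage to $U$, irresolvability) are actually needed.
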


\begin{proof}  Let $\mathcal{S}=\{S_n : n\in \omega\}$ be a disjoint family of  finite subsets of a space $X$. Let $\mathcal{H}$ be a maximal almost disjoint family of infinite subsets of $\omega$. For every $E\in \mathcal{H}$ we consider $A_E=\bigcup\limits_{n\in E} S_n$. Since $A_E\cap A_F$ is finite for any $E,F\in \mathcal{H}$ ($E\neq F$), $Int A_E\cap Int A_F=\emptyset$. Since $\bigcup\limits_{E\in \mathcal{H}} A_E$ is countable, there is $E\in \mathcal{H}$ such that $Int A_E=\emptyset$. Since $X$ is submaximal, the set $A_E$ has no limit points. It follows that $\mathcal{S}$ cannot weakly converge to any point of $X$.
\end{proof}

This answers in the positive
Question 4.9 of \cite{KLT}. By Corollary 3.18 in \cite{KLT} and Theorem 3.1 in \cite{Os3}, we get the following corollaries.

\begin{corollary} If $X$ is a crowded submaximal space, then $X$ has the property $(\kappa)$.
\end{corollary}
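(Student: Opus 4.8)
The plan is to read the corollary as an immediate consequence of Proposition~\ref{pr1} together with the known characterization of property $(\kappa)$ through weakly convergent sequences, so that essentially no new combinatorics is required. First I would invoke Proposition~\ref{pr1}: since $X$ is crowded and submaximal, it has \emph{no} weakly convergent sequences at all, i.e. no countably infinite, pairwise disjoint family of non-empty finite subsets of $X$ weakly converges to any point. This is the whole substance of the hypothesis, and it is a global, unconditional statement rather than a statement about a particular sequence.

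Next I would bring in Corollary 3.18 in \cite{KLT}, which characterizes property $(\kappa)$ in terms of weakly convergent sequences: a space has property $(\kappa)$ exactly when every pairwise disjoint sequence of finite subsets admits a subsequence that is \emph{not} a weakly convergent sequence. The point I would stress is that this must be the \emph{subsequence} form of the statement, not a naive equivalence with ``$X$ has no weakly convergent sequence.'' Indeed, the one-point compactification $\omega+1$ carries the weakly convergent sequence of singletons $\{n\}\to\omega$ and yet still has property $(\kappa)$, because that very sequence is strongly point-finite (the points $n$ are isolated, so $U_n=\{n\}$ is a point-finite open expansion). Thus the presence of some weakly convergent sequence does not destroy property $(\kappa)$, and the correct bridge is the assertion about passing to a non-weakly-convergent subsequence. (I would keep Theorem 3.1 in \cite{Os3}, which equates property $(\kappa)$ with Baireness of $B_1(X)$, in reserve for the companion corollary rather than for this one.)

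With these two ingredients the argument closes at once. Let $\{S_n:n\in\omega\}$ be an arbitrary pairwise disjoint sequence of non-empty finite subsets of $X$. Any subsequence of $\{S_n\}$ that weakly converged to a point would itself be a weakly convergent sequence, contradicting Proposition~\ref{pr1}; hence no subsequence of $\{S_n\}$ weakly converges, so $\{S_n\}$ trivially has a subsequence---in fact every subsequence---that is not a weakly convergent sequence. By Corollary 3.18 in \cite{KLT} this gives property $(\kappa)$. I expect the only genuinely delicate step to be matching the exact quantifier structure of the \cite{KLT} characterization, to be sure that ``no weakly convergent sequences whatsoever'' is strong enough to feed into it; the crowded-submaximal hypothesis is precisely what supplies this strong, unconditional absence, so once the characterization is applied in its subsequence form the conclusion follows with no further work.
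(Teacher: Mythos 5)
Your route is the same as the paper's: the corollary is derived in one line from Proposition~\ref{pr1} together with Corollary~3.18 of \cite{KLT}, exactly as you propose (and you are right that Theorem~3.1 of \cite{Os3} is only needed for the companion corollary on Baireness of $B_1(X)$). One caveat: the biconditional you attribute to Corollary~3.18 --- property $(\kappa)$ holds iff every disjoint sequence of finite sets has \emph{some} subsequence that is not weakly convergent --- cannot be the right statement, and your own example refutes it: in $\omega+1$ every subsequence of $\{\{n\}:n\in\omega\}$ weakly converges to $\omega$, so there is no non-weakly-convergent subsequence, yet the space has property $(\kappa)$. Moreover, ``has a non-weakly-convergent subsequence'' is far too weak a hypothesis to force a strongly point-finite subsequence, so the lemma in the form you state would be false in both directions. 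The correct bridge is the one-way dichotomy behind Corollary~3.18: a disjoint sequence of finite sets with \emph{no} weakly convergent subsequence has a strongly point-finite subsequence (equivalently, a space with no weakly convergent sequences has property $(\kappa)$). Since Proposition~\ref{pr1} hands you precisely this strong, unconditional hypothesis, your argument closes as intended once the intermediate weakening to ``some subsequence is not weakly convergent'' is deleted.
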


This answers in the positive
Question 4.10 of \cite{KLT}.

\begin{corollary} If $X$ is a crowded submaximal space, then $B_1(X)$ is Baire.
\end{corollary}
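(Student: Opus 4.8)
The plan is to obtain the corollary by chaining the immediately preceding corollary with the characterization of the Baire property of $B_1(X)$ recalled in the introduction, so that no new combinatorics are needed. First I would note that the previous corollary already supplies the essential fact: a crowded submaximal space $X$ has the property $(\kappa)$. That corollary is itself a consequence of Proposition \ref{pr1} (such a space has no weakly convergent sequences) together with Corollary 3.18 in \cite{KLT}, which translates the absence of weakly convergent sequences into property $(\kappa)$.

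The second and final step is to pass from a property of $X$ to a property of the function space $B_1(X)$. For this I would invoke the result of \cite{Os3}, stated in the introduction, that $X$ has the property $(\kappa)$ if and only if $B_1(X)$ is Baire. Applying the forward implication to the space $X$ at hand, which has property $(\kappa)$ by the first step, gives at once that $B_1(X)$ is Baire.

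I do not expect any genuine obstacle here: the whole argument is essentially a two-line concatenation of results established earlier, and all the substantive work is contained in Proposition \ref{pr1}, in Corollary 3.18 of \cite{KLT}, and in the Baire characterization of \cite{Os3}. The only matters requiring attention are bookkeeping ones, namely verifying that the standing Tychonoff hypothesis of the paper meets the hypotheses of the cited characterization, and confirming that the notion of property $(\kappa)$ is used consistently across the sources, which follows directly from the definitions in Section 1.
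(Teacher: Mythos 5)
Your proposal matches the paper's own derivation exactly: the paper obtains both corollaries in one stroke from Proposition \ref{pr1} via Corollary 3.18 of \cite{KLT} (no weakly convergent sequences implies property $(\kappa)$) and Theorem 3.1 of \cite{Os3} (property $(\kappa)$ is equivalent to Baireness of $B_1(X)$). Your two-step chaining is precisely that argument, so the proof is correct and takes essentially the same approach.
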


In (\cite{JSS}, Theorem 4.1) it is proved under $ZFC$ that for each infinite cardinal $\kappa$ the Cantor cube $D^{2^{\kappa}}$
contains a dense submaximal subspace $X$ with $|X|=\Delta(X)=\kappa.$

\medskip
Thus in the case $k=\omega$ there is a countable dense subset in the Cantor cube $D^{\mathfrak{c}}$ which has no weakly convergent sequences. This answers in the positive
Question 4.8 of \cite{KLT}. It follows that by Theorem 3.21 in \cite{KLT} the following result holds.

\begin{theorem} There exists in $ZFC$
a separable pseudocompact dense subspace $P\subset D^{\mathfrak{c}}$ which is not a $\Delta_1$-space but
has the property $(\kappa)$.
\end{theorem}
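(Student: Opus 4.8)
The plan is to assemble the example from three facts already available: the submaximal construction of \cite{JSS}, Proposition~\ref{pr1}, and the transfer result (\cite{KLT}, Theorem 3.21). Concretely, the strategy is to produce \emph{in $ZFC$} a countable dense subset of $D^{\mathfrak{c}}$ having no weakly convergent sequences, and then to let Theorem 3.21 of \cite{KLT} upgrade it to the required separable pseudocompact dense subspace. First I would specialize (\cite{JSS}, Theorem 4.1) to $\kappa=\omega$: since $2^{\omega}=\mathfrak{c}$, this yields a dense submaximal subspace $X\subseteq D^{2^{\omega}}=D^{\mathfrak{c}}$ with $|X|=\omega$, so $X$ is a countable dense subset of $D^{\mathfrak{c}}$. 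Next I would check that $X$ is crowded: the Cantor cube has no isolated points, and if some $x\in X$ were isolated in $X$ there would be an open $U\subseteq D^{\mathfrak{c}}$ with $U\cap X=\{x\}$; density of $X$ would force $\{x\}$ to be dense in $U$, and since singletons are closed in the Hausdorff space $D^{\mathfrak{c}}$ this would give $U=\{x\}$, contradicting crowdedness of $D^{\mathfrak{c}}$.

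With $X$ crowded and submaximal, Proposition~\ref{pr1} shows that $X$ has no weakly convergent sequences, so $X$ is a $ZFC$ example of a countable dense subset of $D^{\mathfrak{c}}$ with no weakly convergent sequences --- exactly the hypothesis demanded by the transfer theorem. Finally I would feed $X$ into (\cite{KLT}, Theorem 3.21), which converts the existence of such a countable dense set into the existence of a separable pseudocompact dense subspace $P\subset D^{\mathfrak{c}}$ that has property $(\kappa)$ yet is not a $\Delta_1$-space, completing the construction.

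Steps one, two, and the final citation are routine; the genuine obstacle is securing the ``no weakly convergent sequences'' condition \emph{in $ZFC$} rather than merely under an additional set-theoretic axiom. This is exactly why the argument is routed through a \emph{submaximal} dense subspace: Proposition~\ref{pr1} converts crowded submaximality into the absence of weakly convergent sequences, while the $ZFC$ existence of a countable dense submaximal subspace of $D^{\mathfrak{c}}$ is delivered by \cite{JSS}. Without this route one would expect only consistency-level results (for instance under Martin's Axiom, as noted in the introduction), so the heart of the theorem is precisely the combination that makes this step go through outright in $ZFC$.
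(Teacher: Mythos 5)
Your proposal follows exactly the paper's route: specialize (\cite{JSS}, Theorem 4.1) to $\kappa=\omega$ to get a countable dense submaximal subspace of $D^{\mathfrak{c}}$, invoke Proposition~\ref{pr1} (noting crowdedness) to rule out weakly convergent sequences, and then apply Theorem 3.21 of \cite{KLT}. The only difference is that you spell out the (correct, routine) verification that the dense subspace is crowded, which the paper leaves implicit.
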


\medskip

This answers in the positive
Question 4.7 of \cite{KLT}. By Theorem \ref{th1} and Theorem 3.1 in \cite{Os3} we get the following

\begin{corollary} There exists in $ZFC$
a separable pseudocompact dense subspace $P\subset D^{\mathfrak{c}}$ such that $B_1(P)$ is Baire, but $B_1(P)$ is not Choquet.
\end{corollary}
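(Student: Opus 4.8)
The plan is to assemble the final corollary purely from results already in hand, so the work is combinatorial bookkeeping rather than a genuine construction. The target object is a separable pseudocompact dense subspace $P\subset D^{\mathfrak{c}}$ for which $B_1(P)$ is Baire but not Choquet. First I would invoke the theorem immediately preceding this corollary: it already produces, in $ZFC$, a separable pseudocompact dense $P\subset D^{\mathfrak{c}}$ that has the property $(\kappa)$ yet is not a $\Delta_1$-space. That single object is exactly what is needed, so no new space has to be built; the remaining task is to translate its two stated properties into the two stated assertions about $B_1(P)$.

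For the positive half, I would use the cited result that $X$ has the property $(\kappa)$ if and only if $B_1(X)$ is Baire (stated in the excerpt as being from \cite{Os3}, Theorem 3.1, and also recorded in the introduction). Since $P$ has the property $(\kappa)$, this equivalence gives directly that $B_1(P)$ is Baire. For the negative half, I would apply Theorem~\ref{th1}, whose equivalence of conditions $(1)$ and $(2)$ says that $P$ has the $\Delta_1$-property if and only if $B_1(P)$ is Choquet. Because $P$ is \emph{not} a $\Delta_1$-space, the contrapositive yields that $B_1(P)$ is not Choquet. Combining the two conclusions gives precisely the asserted corollary.

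The only mild subtlety, and the step I would treat most carefully, is keeping the logical direction of each equivalence straight: property $(\kappa)$ is what makes $B_1(P)$ Baire, while the failure of the $\Delta_1$-property is what makes $B_1(P)$ fail to be Choquet, and these are two genuinely different characterizations. There is no real obstacle here, since both equivalences are already proved earlier; the corollary is simply a juxtaposition of the forward direction of one theorem with the contrapositive of another applied to the same witness space. Thus the entire argument reduces to citing the preceding theorem for the existence of $P$ and then reading off the two desired properties of $B_1(P)$ from Theorem~\ref{th1} and \cite{Os3}.
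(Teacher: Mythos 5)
Your proposal is correct and matches the paper's argument exactly: the paper derives this corollary from the immediately preceding theorem (existence of the separable pseudocompact dense $P\subset D^{\mathfrak{c}}$ with property $(\kappa)$ but not $\Delta_1$), combined with Theorem 3.1 of \cite{Os3} for Baireness and Theorem \ref{th1} for the failure of the Choquet property. No differences to report.
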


\medskip

Let $\tau_{uc}$  be the topology of uniform convergence on compacta.

\medskip

{\bf Question.}  Assume that $X$ a Tychonoff space. Is is true that $X$ has the $\Delta_2$-property  if and only if $(B_1(X), \tau_{uc})$ is Choquet?

\medskip

Definition of $\Delta_2$-property see (\cite{KKL}, Def 1.3).

\medskip

{\bf Acknowledgements.} The author would like to thank Anton Lipin for discussions of Proposition \ref{pr1} and Evgenii Reznichenko for the motivation for this paper.

The author would like to thank the
referee for valuable comments.


\bibliographystyle{model1a-num-names}
\bibliography{<your-bib-database>}

\end{document}